\newtheorem{thm}{Theorem}[section]
\newtheorem{cor}[thm]{Corollary}
\newtheorem{lem}[thm]{Lemma}
\theoremstyle{definition}
\theoremstyle{remark}
\newtheorem{rem}[thm]{Remark}
\numberwithin{equation}{section}
\begin{document}

\author[H. Sahleh \lowercase{and} A. A. Alijani]{ H\lowercase{ossein} Sahleh  \lowercase{and} A\lowercase{li} A\lowercase{kbar} Alijani}
\title[E\lowercase{xtensions of locally compact abelian, torsion-free groups by compact torsion abelian groups }] {E\lowercase{xtensions of locally compact abelian, torsion-free groups by compact torsion abelian groups}}

\subjclass[2010]{20K35, 22B05} \keywords{locally compact abelian group, extension, divisible group, torsion-free group}

\maketitle

\begin{center}
 {\it Department of Pure Mathematics, Faculty of Mathematical Sciences,\\ University of Guilan,
Rasht, Iran}

\email{sahlehg@gmail.com}
\end{center}

\begin{abstract}
Let $X$ be a compact torsion abelian group. In this paper, we show that an extension of $F_{p}$ by $X$ splits where $F_{p}$ is the p-adic number group and $p$ a prime number. Also, we show that an extension of a torsion-free, non-divisible LCA group by $X$ is not split.
\end{abstract}

\maketitle
\section*{Introduction}

 Throughout, all groups are Hausdorff  abelian topological groups and will be written additively. Let $\pounds$ denote the category of locally compact abelian (LCA) groups with continuous homomorphisms as morphisms. The Pontrjagin dual of a group $G$ is denoted by $\hat{G}$. A morphism is called proper if it is open onto its image and a short exact sequence $0\to A\stackrel{\phi}{\to} B\stackrel{\psi}{\to}C\to 0$ in $\pounds$ is said to be proper exact if $\phi$ and $\psi$ are proper morphisms. In this case the sequence is called an extension of $A$ by $C$ ( in $\pounds$). Following \cite{FG1}, we let $Ext(C,A)$ denote the (discrete) group of extensions of $A$ by $C$. The splitting problem in LCA groups is finding conditions on $A$ and $C$ under which $Ext(C,A)=0$. In \cite{F1,F2,FG2} the splitting problem is studied. We have studied the splitting problem in the category of divisible, LCA groups \cite{SA1}. By using the splitting problem, we determined the LCA groups $G$ such that the maximal torsion subgroup of $G$ is closed \cite{SA2}. Let $X$ be a compact torsion group. In \cite[Theorem 1]{F2}, it is proved that if $G$ is a divisible LCA group, then $Ext(X,G)=0$. However, the suggested proof in \cite{F2} appears to be incomplete as it uses the incorrect Proposition 8 of \cite{F1}. In \cite{SA1}, we proved that if $G$ is a divisible, $\sigma-$compact group, then $Ext(X,G)=0$. Let $P$ be the set of all prime numbers, $J_{p}$, the p-adic integer group and $F_{p}$, the p-adic number group which is the minimal divisible extension of $J_{p}$  for every $p\in P$ \cite{HR}. By \cite[25.33]{HR}, a divisible, torsion-free LCA group $G$ has the form $G\cong \Bbb R^{n}\bigoplus A\bigoplus M\bigoplus E$, where $A$ is a discrete, torsion-free, divisible group, $M$ a compact connected torsion-free group and $E$, the minimal divisible extension of $\prod_{p\in P}J_{p}^{n_{p}}$ where $n_{p}$ is a cardinal number for every $p\in P$. In general, $E\neq \prod_{p\in P}F_{p}^{n_{p}}$. Let $I$ be a finite subset of $P$ such that
 $$n_{p}=
\left\{
	\begin{array}{ll}
		0  & \mbox{if } p\not\in I \\
		1 & \mbox{if } p\in I
	\end{array}
\right.$$

Then the minimal divisible extension of $\prod_{p\in I}J_{p}^{n_{p}}$ is $\prod_{p\in I}F_{p}^{n_{p}}$. In this paper, we show that if $G\cong \Bbb R^{n}\bigoplus A\bigoplus M\bigoplus \prod_{p\in I} F_{p}^{n_{p}}$, then $Ext(X,G)=0$ (see Theorem \ref{7}).

 The additive topological group of real numbers is denoted by $\Bbb R$, $\Bbb Q$ is the group of rationales with discrete topology and $\Bbb Z$ is the group of integers. The topological isomorphism will be denote by " $\cong$ ". For more on locally compact abelian groups see \cite{HR}.

\newcommand{\stk}[1]{\stackrel{#1}{\longrightarrow}}
\section{Extensions of a torsion-free LCA group by a compact torsion abelian group}
\begin{lem}\label{1}
Let $X\in\pounds$ and $p$ a prime number. Then $nExt(X,F_{p})=Ext(X,F_{p})$ for every positive integer $n$.
\end{lem}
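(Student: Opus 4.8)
The plan is to reduce the assertion to the single fact that multiplication by $n$ is a \emph{topological} automorphism of $F_{p}$, and then to read off the conclusion from the functoriality of $Ext(X,-)$ on $\pounds$. Here $nExt(X,F_{p})$ denotes the image of the multiplication-by-$n$ endomorphism of the abelian group $Ext(X,F_{p})$, so the claim is precisely that $Ext(X,F_{p})$ is $n$-divisible.

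\emph{Step 1: multiplication by $n$ is a topological automorphism of $F_{p}$.} Write $n=p^{k}m$ with $\gcd(m,p)=1$. Multiplication by $m$ is a topological automorphism of $J_{p}$, its inverse being multiplication by $m^{-1}\in J_{p}$, and hence it extends to a topological automorphism of the minimal divisible extension $F_{p}$ of $J_{p}$. Multiplication by $p$ is a topological automorphism of $F_{p}$ as well: the subgroups $p^{j}J_{p}$, $j\in\Z$, form a neighbourhood basis of $0$ in $F_{p}$, and multiplication by $p$ permutes this basis via the shift $p^{j}J_{p}\mapsto p^{j+1}J_{p}$, so it is a continuous, open bijection. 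Composing, the map $\mu_{n}\colon F_{p}\to F_{p}$, $x\mapsto nx$, is a topological automorphism; equivalently, $0\to F_{p}\stackrel{\mu_{n}}{\to}F_{p}\to 0$ is proper exact in $\pounds$ because $F_{p}/nF_{p}=0$.

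\emph{Step 2: pass to $Ext$.} Since $Ext(X,-)$ is an additive functor on $\pounds$ (the Baer sum being compatible with morphisms in the coefficient variable; see \cite{FG1}), the endomorphism $Ext(X,\mu_{n})$ of $Ext(X,F_{p})$ induced by $\mu_{n}$ is exactly multiplication by $n$ on $Ext(X,F_{p})$. As $\mu_{n}$ is an isomorphism in $\pounds$, $Ext(X,\mu_{n})$ is an isomorphism, in particular surjective, and therefore $nExt(X,F_{p})=Ext(X,F_{p})$. Concretely, an extension $E$ of $F_{p}$ by $X$ equals $nE'$, where $E'$ is the pushout of $E$ along $\mu_{n}^{-1}$, since $nE'=(\mu_{n})_{*}E'=E$.

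The only point requiring genuine care is Step 1 — specifically that multiplication by $p$ itself, not merely by integers coprime to $p$, is a topological (and not just abstract) automorphism of $F_{p}$ — together with the routine observation that the pushout of a proper extension along a topological isomorphism is again a proper extension, so that all the maps above remain morphisms of $\pounds$. Granting these, the argument is purely formal.
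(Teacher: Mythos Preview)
Your argument is correct and rests on the same underlying fact as the paper's proof: that multiplication by $n$ is a proper self-map of $F_{p}$, hence induces multiplication by $n$ on $Ext(X,F_{p})$ surjectively. The packaging differs slightly. The paper cites \cite[Lemma~2]{A} for the openness of $\mu_{n}$, forms the proper short exact sequence $0\to\ker\mu_{n}\to F_{p}\stackrel{\mu_{n}}{\to}F_{p}\to 0$, and reads off the surjectivity of $(\mu_{n})_{*}$ from the tail of the long exact sequence \cite[Corollary~2.10]{FG1}. You instead verify by hand that $\mu_{n}$ is a \emph{topological automorphism} (using the decomposition $n=p^{k}m$ and the neighbourhood basis $\{p^{j}J_{p}\}$), and then invoke only plain functoriality of $Ext(X,-)$ on isomorphisms. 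Your route is a little more self-contained (no external citation for openness, no six-term sequence) and incidentally yields more: since $(\mu_{n})_{*}$ is an isomorphism, $Ext(X,F_{p})$ is torsion-free as well as divisible, a conclusion the paper reaches only later (Theorem~\ref{6}) by a separate argument.
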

\begin{proof}
Let $n$ be a positive integer and $f:F_{p}\to F_{p}$, $f(x)=nx$ for all $x\in F_{p}$. By \cite[Lemma 2]{A}, $f$ is open. So $f$ is a proper morphism. Consider the exact sequence $0\to Ker f\to F_{p}\stk{f} F_{p}\to 0$. By Corollary 2.10 of \cite{FG1}, we have the exact sequence
\begin{eqnarray}\label{2}
\to Ext(X,Ker f)\to Ext (X,F_{p})\stk{f_{*}} Ext (X,F_{p})\to 0
\end{eqnarray}\\
Since $f_* (Ext(X,F_{p}))=n Ext(X,F_{p})$, it follows from sequence \eqref{2} that $nExt(X,F_{p})=Ext(X,F_{p})$.
\end{proof}

\begin{lem}\label{3}
Let $X$ be a compact torsion group. Then $Ext(X,F_{p})=0$.
\end{lem}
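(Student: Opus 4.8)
The plan is to deduce this from Lemma \ref{1} together with the fact that a compact torsion abelian group has bounded exponent. Lemma \ref{1} says precisely that $Ext(X,F_{p})$ is a divisible group (the condition $nG=G$ for all $n\geq 1$ characterizes divisibility), so it will be enough to produce a single positive integer that annihilates $Ext(X,F_{p})$; the boundedness of $X$ will hand us one.

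First I would check that $X$ is bounded. Being compact, $X$ is a Baire space, and $X=\bigcup_{n\geq 1}X[n]$, where each $X[n]=\{x\in X:nx=0\}$ is a closed subgroup. The Baire category theorem forces some $X[n_{0}]$ to have nonempty interior, hence to be an open subgroup, hence to be of finite index $k$ in the compact group $X$. Then $kx\in X[n_{0}]$ for every $x\in X$, so $mX=0$ with $m=kn_{0}$. (One could instead invoke the description of compact torsion groups available in \cite{HR}.)

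Next I would pass to $Ext$. By \cite{FG1}, $Ext(-,F_{p})$ is an additive functor on $\pounds$, so it carries the endomorphism $m\cdot\mathrm{id}_{X}$ of $X$ --- which is the zero morphism, since $mX=0$ --- to the zero endomorphism of $Ext(X,F_{p})$; but the endomorphism of $Ext(X,F_{p})$ induced by $m\cdot\mathrm{id}_{X}$ is multiplication by $m$. Hence $mExt(X,F_{p})=0$. Combining this with Lemma \ref{1}, which gives $mExt(X,F_{p})=Ext(X,F_{p})$, yields $Ext(X,F_{p})=0$.

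The substantive step is the first one, the passage from ``compact and torsion'' to ``bounded exponent''; everything after that is formal bookkeeping with the $Ext$ bifunctor of \cite{FG1}. I do not anticipate a genuine obstacle, but the point deserving the most care is the identification of the map that $m\cdot\mathrm{id}_{X}$ induces on $Ext(X,F_{p})$ with multiplication by $m$, which one should trace through the additive (Baer sum) structure set up in \cite{FG1}.
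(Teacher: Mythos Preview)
Your argument is correct and shares the paper's overall architecture: both proofs show that $Ext(X,F_{p})$ is annihilated by some positive integer and then invoke Lemma~\ref{1} (divisibility) to conclude it vanishes. The paper also uses boundedness of $X$, citing \cite[Theorem~25.9]{HR} rather than your Baire argument.

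The one genuine difference lies in how $nExt(X,F_{p})=0$ is obtained. You deduce it directly from functoriality: $m\cdot\mathrm{id}_{X}=0$ induces the zero map on $Ext(X,F_{p})$, and that induced map is multiplication by $m$. The paper instead chooses a compact open subgroup $K\subseteq F_{p}$, uses the exact sequence $Ext(X,K)\to Ext(X,F_{p})\to Ext(X,F_{p}/K)\to 0$, kills the right-hand term via divisibility of $F_{p}/K$ and \cite[Theorem~3.4]{FG1}, and kills the image of the left-hand term using $nExt(X,K)=0$ from \cite[Lemma~2.5]{L}. Your route is shorter and avoids any appeal to the totally disconnected structure of $F_{p}$; the paper's route, on the other hand, makes explicit that the boundedness of $Ext(X,F_{p})$ really comes from the compact piece $K$, which is the perspective needed later in Theorem~\ref{6} when $X$ is merely compact and one must separate the compact-open part of $F_{p}$ from its discrete quotient.
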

\begin{proof}
$F_{p}$ is a totally disconnected group. So, by Theorem 24.30 of \cite{HR}, $F_{p}$ contains a compact open subgroup $K$. Now we have the following exact sequence
\begin{eqnarray}\label{4}
...\to Ext(X,K)\to Ext(X,F_{p})\to Ext(X,F_{p}/K)\to 0
\end{eqnarray}
Since $F_{p}$ is divisible, so $Ext(X,F_{p}/K)=0$ (see \cite[Theorem 3.4]{FG1}). Since $X$ is compact and  torsion, so by \cite[Theorem 25.9]{HR}, $nX=0$ for some positive integer $n$. Hence, $nExt(X,K)=0$ (see \cite[Lemma 2.5]{L}). Since \eqref{4} is exact, so $nExt(X,F_{p})=0$. Hence by Lemma \ref{1}, $Ext(X,F_{p})=0$.
\end{proof}
\begin{rem}\label{5}
Let $X$ be a group and $f:X\to X$,$f(x)=nx$ for all $x\in X$. If $f$ is a topological isomorphism for every positive integer $n$, then $X$ is a divisible, torsion-free group.
\end{rem}
\begin{thm}\label{6}
Let $X$ be a compact group and $p$ a prime number. Then $Ext(X,F_{p})$ is a divisible, torsion-free group.
\end{thm}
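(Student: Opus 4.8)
The plan is to transport divisibility and torsion-freeness from $F_p$ itself to $Ext(X,F_p)$, using the elementary fact that on a divisible torsion-free group multiplication by a positive integer is an automorphism. Fix a positive integer $n$ and let $f\colon F_p\to F_p$ be multiplication by $n$. As in the proof of Lemma~\ref{1}, $f$ is open by \cite[Lemma 2]{A}, hence a proper morphism; since $F_p$ is torsion-free $f$ is injective, and since $F_p$ is divisible $f$ is surjective. Thus $f$ is a continuous, open, bijective homomorphism, hence a topological automorphism of $F_p$, and its inverse $f^{-1}$ (multiplication by $1/n$ on the $\mathbb{Q}$-vector space $F_p$) is again a proper morphism.

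Next I would apply the functor $Ext(X,-)$. By functoriality $f_{*}\circ(f^{-1})_{*}=(f\circ f^{-1})_{*}=\mathrm{id}$ and likewise on the other side, so $f_{*}$ is an automorphism of $Ext(X,F_p)$. The key identification is that $f_{*}$ coincides with multiplication by $n$ on $Ext(X,F_p)$: this follows from the additivity of $Ext$ (in the sense of \cite{FG1}) once one writes $f=n\cdot\mathrm{id}_{F_p}$ as the composite of the diagonal $F_p\to F_p^n$ with the codiagonal $F_p^n\to F_p$, the codiagonal pushout realizing the Baer sum on $Ext(X,F_p)$. Hence, for every positive integer $n$, the endomorphism $\alpha\mapsto n\alpha$ of $Ext(X,F_p)$ is an automorphism (indeed a topological automorphism, since $Ext(X,F_p)$ is discrete).

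Finally, Remark~\ref{5} applies to $Ext(X,F_p)$ and gives at once that it is a divisible, torsion-free group. I note that Lemma~\ref{1} already records the surjectivity of $\alpha\mapsto n\alpha$, so the genuinely new content here is the injectivity, that is, the absence of torsion; the argument above obtains both halves simultaneously from the invertibility of $f$, and it in fact uses nothing about $X$ beyond functoriality of $Ext(X,-)$ and discreteness of $Ext(X,F_p)$.

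The step I expect to need the most care is the identification of $f_{*}$ with multiplication by $n$: this is the standard principle that the endomorphism $n\cdot\mathrm{id}$ of the coefficient group induces $n\cdot\mathrm{id}$ on $Ext$, but one should check that the relevant additivity and functoriality statements of \cite{FG1} apply, in the locally compact category, to the proper morphisms $f$ and $f^{-1}$. One could instead take divisibility of $Ext(X,F_p)$ directly from Lemma~\ref{1} and try to argue torsion-freeness separately from a proper exact sequence involving $J_p$; but the functorial route is shorter and makes transparent why the statement is true.
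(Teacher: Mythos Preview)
Your argument is correct but follows a different route from the paper's. Both proofs finish by invoking Remark~\ref{5} after showing that multiplication by each $n>0$ is an automorphism of $Ext(X,F_p)$; the difference lies in which variable of $Ext$ carries the work. You act in the second variable: since $F_p$ is torsion-free and divisible, $\times n$ is a topological automorphism of $F_p$, and functoriality of $Ext(X,-)$ (together with the standard identification $(\times n)_*=\times n$) transports this to an automorphism of $Ext(X,F_p)$. This uses nothing about $X$, so it in fact proves the subsequent Corollary for arbitrary $X\in\pounds$ in one stroke, bypassing the paper's later reduction to a compact open subgroup. The paper instead acts in the first variable, writing $0\to X\stackrel{\times n}{\to}X\to X/nX\to 0$ and combining the contravariant long exact sequence with Lemma~\ref{3} (applied to the compact torsion group $X/nX$) to force $\times n$ to be an isomorphism on $Ext(X,F_p)$. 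That approach genuinely needs compactness of $X$ and the input of Lemma~\ref{3}, and it tacitly assumes that $\times n\colon X\to X$ is injective---which fails whenever $X$ has $n$-torsion; your second-variable argument sidesteps this delicacy entirely.
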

\begin{proof}
Let $n$ be a positive integer. Then the exact sequence $0\to X\stackrel{\times n}{\to} X\stackrel{}{\to} X/nX\to 0$ induces the following exact sequence $$Ext(X/nX,F_{p})\to Ext(X,F_{p})\stackrel{\times n}\to Ext(X,F_{p})\to 0$$ By Lemma \ref{3}, $Ext(X/nX,F_{p})=0$. So $Ext(X,F_{p})\stackrel{\times n}\to Ext(X,F_{p})$ is a topological isomorphism. Hence by Remark \ref{5}, $Ext(X,F_{p})$ is a divisible, torsion-free group.
\end{proof}
\begin{cor}
Let $X\in \pounds$. Then $Ext(X,F_{p})$ is a divisible, torsion-free group.
\end{cor}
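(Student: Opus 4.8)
The plan is to run the argument from Lemma~\ref{1} and Theorem~\ref{6} for an arbitrary $X\in\pounds$ directly, with no recourse to structure theory. Everything hinges on one observation about $F_{p}$: the multiplication map $f\colon F_{p}\to F_{p}$, $f(x)=nx$, is a \emph{topological isomorphism} for every positive integer $n$. Indeed $f$ is injective because $F_{p}$, the additive group of the $p$-adic number field, is torsion-free; it is surjective because $F_{p}$ is divisible; it is continuous; and it is open by \cite[Lemma~2]{A}, the very fact already used in Lemma~\ref{1}.

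Granting this, I would first feed it into the exact sequence used to prove Lemma~\ref{1}. Applying \cite[Corollary~2.10]{FG1} to the proper exact sequence $0\to\operatorname{Ker}f\to F_{p}\stk{f}F_{p}\to 0$ yields
\[
Ext(X,\operatorname{Ker}f)\to Ext(X,F_{p})\stk{f_{*}}Ext(X,F_{p})\to 0 .
\]
Since $F_{p}$ is torsion-free we have $\operatorname{Ker}f=\{0\}$, so the first term vanishes and $f_{*}$ is injective; the trailing $0$ makes $f_{*}$ surjective as well. Hence $f_{*}$ is an isomorphism, and because $Ext(X,-)$ is an additive functor, $f_{*}$ is precisely multiplication by $n$ on $Ext(X,F_{p})$. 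As $Ext(X,F_{p})$ is discrete, multiplication by $n$ is therefore a topological isomorphism of $Ext(X,F_{p})$ for every positive integer $n$.

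The proof then closes by applying Remark~\ref{5} with $Ext(X,F_{p})$ in the role of $X$: any group on which every multiplication-by-$n$ map is an isomorphism is divisible and torsion-free. This is the corollary, and in particular it recovers Theorem~\ref{6} as the case of compact $X$.

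I do not anticipate a genuine obstacle, since all the machinery is already developed above; the only steps worth a word of care are that $f$ is genuinely bijective (this uses torsion-freeness \emph{and} divisibility of $F_{p}$, i.e.\ that $F_{p}$ is a characteristic-$0$ field) and that $Ext(X,-)$, being additive, sends the isomorphism $f$ to an isomorphism. An alternative, longer route would reduce to the compact case via the structure theorem \cite[Theorem~24.30]{HR}: write $X\cong\mathbb{R}^{n}\oplus H$ with $H$ containing a compact open subgroup $K$, note $Ext(\mathbb{R}^{n},F_{p})=0$, and use the six-term exact sequence attached to $0\to K\to H\to H/K\to 0$ together with Theorem~\ref{6} applied to $K$; but this requires controlling $Ext(H/K,F_{p})$ for the discrete quotient $H/K$, which the direct argument avoids entirely.
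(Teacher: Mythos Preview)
Your argument is correct and takes a genuinely different route from the paper's. You exploit that $F_{p}$ is not only divisible but also torsion-free, so that multiplication by $n$ is a topological \emph{isomorphism} of $F_{p}$; functoriality of $Ext(X,-)$ then makes multiplication by $n$ an isomorphism on $Ext(X,F_{p})$ for \emph{every} $X\in\pounds$ at once, and Remark~\ref{5} finishes. (Once $f$ is known to be an isomorphism, the detour through the long exact sequence is harmless but superfluous: any additive functor sends isomorphisms to isomorphisms.) The paper instead follows precisely the ``alternative, longer route'' you sketch at the end: it invokes \cite[Theorem~24.30]{HR} to write $X=\Bbb R^{n}\oplus H$ with $K$ compact open in $H$, kills $Ext(H/K,F_{p})$ via divisibility of $F_{p}$ and \cite[Theorem~3.4]{FG1} (the point you flagged as needing control), deduces $Ext(H,F_{p})\cong Ext(K,F_{p})$, and then appeals to Theorem~\ref{6} for the compact group $K$. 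Your approach is shorter and self-contained, bypassing both the structure theorem and the vanishing result of Lemma~\ref{3} that underlies Theorem~\ref{6}; the paper's approach has the virtue of exhibiting the corollary as a formal consequence of the compact case, at the cost of an extra reduction.
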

\begin{proof}
Let $X\in \pounds$. By \cite[Theorem 24.30]{HR}, $X=\Bbb R^{n}\bigoplus H$ where $H$ contains a compact open subgroup $K$. Consider the exact sequence $$Ext(H/K,F_{p})\to Ext(H,F_{p})\to Ext(K,F_{p})\to 0$$ Since $H/K$ is a discrete group and $F_{p}$ a divisible group, so $Ext(H/K,F_{p})=0$. Hence $Ext(H,F_{p})\cong Ext(K,F_{p})$. By Theorem \ref{6}, $Ext(K,F_{p})$ is a divisible, torsion-free group. So $Ext(X,F_{p})$ is a divisible, torsion-free group.
\end{proof}
\begin{thm}\label{7}
Let $X$ be a compact torsion group and $G\cong \Bbb R^{n}\bigoplus A\bigoplus M\bigoplus \prod_{p\in I} F_{p}^{n_{p}}$ where $I$ is a finite subset of $P$ defined as follows:
 $$n_{p}=
\left\{
	\begin{array}{ll}
		0  & \mbox{if } p\not\in I \\
		1 & \mbox{if } p\in I
	\end{array}
\right.$$
Then $Ext(X,G)=0$.
\end{thm}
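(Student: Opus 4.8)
The strategy is to decompose $G$ into its summands and show that $Ext(X,-)$ vanishes on each piece, then assemble. Recall that $Ext(X,-)$ commutes with finite direct sums, so it suffices to prove $Ext(X,\mathbb{R}^n)=0$, $Ext(X,A)=0$, $Ext(X,M)=0$, and $Ext(X,\prod_{p\in I}F_p^{n_p})=0$ separately, the last being the genuinely new ingredient.

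First I would dispose of the classical summands. Since $X$ is compact and torsion, $nX=0$ for some positive integer $n$ by \cite[Theorem 25.9]{HR}. For the discrete, torsion-free, divisible group $A$: $A$ is injective in $\pounds$ (being discrete and divisible, cf.\ \cite[Theorem 3.4]{FG1} or its dual), so $Ext(X,A)=0$. For $\mathbb{R}^n$: every extension of a compact group by $\mathbb{R}^n$ splits — $\mathbb{R}^n$ is both divisible and, up to the usual argument, injective for extensions with compact quotient; concretely one can invoke that $\mathbb{R}^n$ is divisible and $Ext(X,\mathbb{R}^n)$ is killed by $n$ (via \cite[Lemma 2.5]{L}) while simultaneously being divisible (via the argument of Theorem \ref{6} applied with $\mathbb{R}^n$ in place of $F_p$, using that multiplication by $n$ on $\mathbb{R}^n$ is a topological isomorphism), forcing $Ext(X,\mathbb{R}^n)=0$. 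For the compact connected torsion-free group $M$: dualize. Then $\hat{M}$ is discrete, torsion-free and divisible, hence $Ext(\hat{M},\hat{X})=0$ by the same injectivity remark, and by Pontrjagin duality for $Ext$ in $\pounds$ \cite[Theorem 2.12]{FG1} we get $Ext(X,M)\cong Ext(\hat{M},\hat{X})=0$.

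The main step is $Ext(X,\prod_{p\in I}F_p^{n_p})=0$. Since $I$ is finite and each $n_p\in\{0,1\}$, this product is actually a \emph{finite} direct sum $\bigoplus_{p\in I}F_p$, so $Ext(X,\prod_{p\in I}F_p^{n_p})\cong\bigoplus_{p\in I}Ext(X,F_p)$, and each term is $0$ by Lemma \ref{3}. This is where the finiteness hypothesis on $I$ is essential: an infinite product of the $F_p$ need not split off as a sum, which is precisely the obstruction the paper flags when it notes $E\neq\prod_{p\in P}F_p^{n_p}$ in general.

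Finally I would combine: writing $G\cong\mathbb{R}^n\oplus A\oplus M\oplus\bigoplus_{p\in I}F_p$ and using additivity of $Ext(X,-)$ in the second variable together with the four vanishing results above yields $Ext(X,G)=0$. The only point requiring care is the justification that $Ext(X,-)$ distributes over the relevant (finite) direct sums in $\pounds$ and that each summand is a proper subgroup in the topological sense so that the Mackey--Moskowitz--type exact sequences of \cite{FG1} apply; since all summands here are honestly topological direct summands of an LCA group, this is routine. I expect no serious obstacle beyond assembling these pieces correctly; the conceptual content is entirely in Lemma \ref{3} and the finiteness of $I$.
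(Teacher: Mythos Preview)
Your overall strategy coincides with the paper's: decompose $G$ into its summands, invoke \cite[Theorem 2.13]{FG1} for additivity of $Ext(X,-)$, use divisibility of $A$ (via \cite[Theorem 3.4]{FG1}) for the discrete summand, dualize for $M$, and apply Lemma~\ref{3} for each $F_p$ with $p\in I$. The paper silently absorbs the $\mathbb{R}^n$ term into the citation of \cite[Theorem 2.13]{FG1}, while you treat it explicitly; that is harmless.

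There is, however, one genuine slip in your treatment of $M$. You correctly observe that $\hat{M}$ is discrete, torsion-free and divisible, but then conclude $Ext(\hat{M},\hat{X})=0$ ``by the same injectivity remark.'' Injectivity of $\hat{M}$ gives $Ext(-,\hat{M})=0$, not $Ext(\hat{M},-)=0$; the variance is wrong, and indeed $Ext(\mathbb{Q},\mathbb{Z})\neq 0$ shows that discrete torsion-free divisible groups are far from projective. The paper's argument instead uses that $\hat{X}$ is discrete and \emph{bounded} (since $nX=0$ forces $n\hat{X}=0$) together with the torsion-freeness of $\hat{M}$, and appeals to \cite[Theorem 27.5]{F}: for a torsion-free group $A$ and a bounded group $B$ one has $Ext(A,B)=0$ (every such extension makes $B$ pure, and bounded groups are pure-injective). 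Alternatively, you could salvage your own line by noting that divisibility of $\hat{M}$ makes $Ext(\hat{M},\hat{X})$ divisible, while $n\hat{X}=0$ makes it $n$-torsion, hence zero; but the justification you wrote does not do this. Once this step is repaired, your proof is essentially the paper's.
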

\begin{proof}
First recall that by \cite[Theorem 2.13]{FG1}, $$Ext(X,G)\cong Ext(X,A)\bigoplus Ext(X,M)\bigoplus \prod_{p\in I}Ext(X,F_{p})$$ Since $X$ is a totally disconnected group, so by \cite[Theorem 3.4]{FG1}, $Ext(X,A)=0$. Also $Ext(X,M)\cong Ext(\hat{M},\hat{X})$. Since $\hat{X}$ is a discrete bounded group and $\hat{M}$ a discrete torsion-free group, so by \cite[Theorem 27.5]{F},$Ext(\hat{M},\hat{X})=0$. By Lemma \ref{3}, $Ext(X,F_{p})=0$. Hence $Ext(X,G)=0$.
\end{proof}
\begin{lem}\label{8}
Let $X$ be a compact torsion group. Then $Hom(X,\Bbb Q/\Bbb Z)\cong \hat{X}$.\\
\end{lem}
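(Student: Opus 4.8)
The plan is to identify $\mathrm{Hom}(X,\mathbb{Q}/\mathbb{Z})$ (algebraic homomorphisms) with the Pontryagin dual $\hat{X}=\mathrm{Hom}_{\mathrm{cont}}(X,\mathbb{T})$ by exploiting the hypothesis that $X$ is compact \emph{and} torsion. First I would recall, as in Lemma \ref{3}, that a compact torsion group is bounded: by \cite[Theorem 25.9]{HR} there is a positive integer $n$ with $nX=0$. Consequently every homomorphism $X\to\mathbb{Q}/\mathbb{Z}$ (continuous or not) lands in the $n$-torsion subgroup $\frac{1}{n}\mathbb{Z}/\mathbb{Z}\subseteq\mathbb{Q}/\mathbb{Z}$, which is a finite — hence discrete — group, and under the embedding $\mathbb{Q}/\mathbb{Z}\hookrightarrow\mathbb{T}=\mathbb{R}/\mathbb{Z}$ this identifies with the $n$-torsion subgroup of $\mathbb{T}$.

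The two key steps are then: (i) every such homomorphism is automatically continuous, and (ii) conversely every continuous character of $X$ has image in $\mathbb{Q}/\mathbb{Z}$. For (i): a group homomorphism $\varphi:X\to \frac{1}{n}\mathbb{Z}/\mathbb{Z}$ has kernel of finite index (at most $n^{\#}$, where the target has order dividing $n$); since $X$ is compact, one shows this finite-index subgroup is open — for instance because $\varphi$ factors through the finite quotient $X/\ker\varphi$, and a compact group has no proper dense finite-index subgroups (the cosets of $\ker\varphi$ partition $X$ into finitely many closed sets, so each is also open) — hence $\varphi$ is continuous with the target carrying the discrete topology, which agrees with its subspace topology in $\mathbb{T}$. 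Thus $\mathrm{Hom}(X,\mathbb{Q}/\mathbb{Z})\subseteq \hat{X}$. For (ii): if $\chi:X\to\mathbb{T}$ is a continuous character, then $n\chi=\chi\circ(\times n)=0$, so $\chi(X)$ lies in the $n$-torsion of $\mathbb{T}$, which equals $\frac{1}{n}\mathbb{Z}/\mathbb{Z}\subseteq\mathbb{Q}/\mathbb{Z}$; hence $\hat{X}\subseteq\mathrm{Hom}(X,\mathbb{Q}/\mathbb{Z})$. Combining (i) and (ii) gives the set-theoretic equality, and since both inclusions are group homomorphisms for the pointwise operations, it is an isomorphism of abstract groups; as $\hat{X}$ is discrete (being the dual of a compact group) the topological isomorphism $\mathrm{Hom}(X,\mathbb{Q}/\mathbb{Z})\cong\hat{X}$ follows.

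The main obstacle, such as it is, is the automatic-continuity step (i): one must use compactness of $X$ in an essential way, since for a general (non-compact) torsion group there are plenty of discontinuous homomorphisms into $\mathbb{Q}/\mathbb{Z}$. The clean way to dispatch it is the finite-index-subgroup argument above — boundedness of $X$ forces the image to be finite, the kernel to have finite index, and compactness forces a finite-index subgroup that is closed (a complement of finitely many cosets, each closed) to be open. Everything else is bookkeeping with the identification $\mathbb{Q}/\mathbb{Z}=\bigcup_n \frac{1}{n}\mathbb{Z}/\mathbb{Z}=(\mathbb{T})_{\mathrm{tors}}$.
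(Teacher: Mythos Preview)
Your argument rests on a misreading of the statement: in the paper's category $\pounds$, $\mathrm{Hom}(X,\Bbb Q/\Bbb Z)$ means \emph{continuous} homomorphisms into the discrete group $\Bbb Q/\Bbb Z$, not arbitrary algebraic ones. Under your reading the lemma is in fact false, and your step~(i) --- the automatic-continuity claim --- breaks accordingly. The assertion that ``a compact group has no proper dense finite-index subgroups'' is wrong, and the parenthetical justification is circular: you say the cosets of $\ker\varphi$ are closed, but that already presupposes $\ker\varphi$ is closed, i.e.\ that $\varphi$ is continuous. For a concrete counterexample take $X=(\Bbb Z/2\Bbb Z)^{\Bbb N}$ with the product topology: this is compact with $2X=0$, and as an $\Bbb F_2$-vector space it has dimension $2^{\aleph_0}$, hence $2^{2^{\aleph_0}}$ algebraic homomorphisms into $\Bbb Z/2\Bbb Z\subset\Bbb Q/\Bbb Z$, whereas $\hat{X}\cong(\Bbb Z/2\Bbb Z)^{(\Bbb N)}$ is countable. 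The kernel of any discontinuous such homomorphism is a proper dense subgroup of index~$2$.

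Once you adopt the correct interpretation, step~(i) is simply unnecessary, and your step~(ii) alone gives a valid and pleasantly direct proof: boundedness forces every continuous character $X\to\Bbb T$ to land in the finite group $\tfrac{1}{n}\Bbb Z/\Bbb Z$, on which the subspace topologies from $\Bbb T$ and from discrete $\Bbb Q/\Bbb Z$ coincide, so the two Hom-sets agree. This is a genuinely different route from the paper's, which instead applies the long exact $\mathrm{Hom}$--$\mathrm{Ext}$ sequence to $0\to\Bbb Z\to\Bbb Q\to\Bbb Q/\Bbb Z\to 0$ to obtain $\mathrm{Hom}(X,\Bbb Q/\Bbb Z)\cong\mathrm{Ext}(X,\Bbb Z)$, and then uses Pontryagin duality for $\mathrm{Ext}$ together with $\mathrm{Ext}(\Bbb T,\hat{X})\cong\hat{X}$. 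Your corrected argument is more elementary and self-contained; the paper's has the advantage of realizing the isomorphism as a connecting map in a standard exact sequence, which is what is actually used downstream in Theorem~\ref{9}.
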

\begin{proof}
The exact sequence $0\to \Bbb Z\to \Bbb Q\to \Bbb Q/\Bbb Z$ induces the following exact sequence $$Hom(X,\Bbb Q)\to Hom(X,\Bbb Q/\Bbb Z)\to Ext(X,\Bbb Z)\to Ext(X,\Bbb Q)$$ Since $X$ is torsion and $\Bbb Q$ is torsion-free, so $Hom(X,\Bbb Q)=0$. Also by \cite[Theorem 3.4]{FG1}, $Ext(X,\Bbb Q)=0$. Hence $Hom(X,\Bbb Q/\Bbb Z)\cong Ext(X,\Bbb Z)$. By \cite[Theorem 2.12 and Proposition 2.17]{FG1}, $Ext(X,\Bbb Z)\cong Ext(\hat{\Bbb Z},\hat{X})\cong \hat{X}$. So $Hom(X,\Bbb Q/\Bbb Z)\cong \hat{X}$.
\end{proof}
\begin{thm}\label{9}
Let $X$ be a compact torsion group and $G$ a torsion-free, non-divisible group. Then $Ext(X,G)\neq 0$.
\end{thm}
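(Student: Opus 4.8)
The plan is to read off from the non-divisibility of $G$ a quotient of prime exponent, and then to run the statement through the covariant six-term $Ext$-sequence, reducing everything to a single extension group over a finite coefficient group. \emph{Step 1 (a quotient of prime exponent).} Since $G$ is torsion-free, $\hat{G}$ is divisible; since $G$ is not divisible, $\hat{G}$ is not torsion-free (see \cite{HR}), hence $\hat{G}$ contains an element of some prime order $p$. The finite subgroup $\Bbb Z(p)\subseteq\hat{G}$ it generates is closed, so the inclusion $\Bbb Z(p)\to\hat{G}$ is a proper monomorphism; dualizing produces a proper epimorphism $q:G\to\Bbb Z(p)$ whose kernel $N$ is a closed, torsion-free subgroup, so that $0\to N\to G\stk{q}\Bbb Z(p)\to 0$ is a proper exact sequence.

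\emph{Step 2 (reduction to a finite coefficient group).} Applying $Hom(X,-)$ and $Ext(X,-)$ to the sequence of Step~1, Corollary~2.10 of \cite{FG1} gives an exact sequence $\cdots\to Ext(X,N)\to Ext(X,G)\to Ext(X,\Bbb Z(p))\to 0$, so $Ext(X,G)$ maps onto $Ext(X,\Bbb Z(p))$ and it is enough to show $Ext(X,\Bbb Z(p))\neq 0$. Now $\Bbb Z(p)$ is finite and self-dual, so by duality \cite[Theorem 2.13]{FG1} one has $Ext(X,\Bbb Z(p))\cong Ext(\Bbb Z(p),\hat{X})$, and since $\hat{X}$ is discrete this is the ordinary algebraic extension group $\hat{X}/p\hat{X}$. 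Because $X$ is compact and torsion, $\hat{X}$ is a bounded discrete group (by \cite[25.9]{HR}; compare Lemma~\ref{8}), and $\hat{X}/p\hat{X}\neq 0$ precisely when $p$ divides the exponent of $X$. If the prime $p$ of Step~1 can be chosen with this property, then $Ext(X,G)\neq 0$ and we are done.

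\emph{The main obstacle.} The prime $p$ in Step~1 is dictated by $G$ (it must satisfy $pG\neq G$, i.e.\ $\hat{G}[p]\neq 0$), whereas Step~2 needs that same prime to divide the exponent of $X$; for an arbitrary compact torsion group $X$ these two conditions need not overlap --- for instance, if $G=J_{q}$ and $X=\Bbb Z(r)$ with $r\neq q$, one checks directly that $Ext(X,G)=0$, so the conclusion can fail without such a linkage. Thus the genuinely delicate point is the passage from $Ext(X,N)\neq 0$ to $Ext(X,G)\neq 0$: the obvious way to see that $Ext(X,N)\neq 0$ --- pull the extension $q$ back along a surjection $X\to\Bbb Z(p)$, which is non-split since the composite of any splitting with $B'\to G$ would be a nonzero homomorphism from the torsion group $X$ into the torsion-free group $G$ --- produces a class lying in the image of the connecting homomorphism $Hom(X,\Bbb Z(p))\to Ext(X,N)$, and such a class dies in $Ext(X,G)$. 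I would therefore expect the theorem as stated to require an extra hypothesis tying the torsion of $X$ to the non-divisibility of $G$ (for example, that some prime $p$ with $pG\neq G$ divides the exponent of $X$); isolating and exploiting that hypothesis --- or, alternatively, producing a class in $Ext(X,N)$ that is not a pull-back from $\Bbb Z(p)$ --- is the step I regard as the crux.
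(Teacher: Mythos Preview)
Your diagnosis is correct, and in fact your counterexample $G=J_{q}$, $X=\Bbb Z(r)$ with distinct primes $q,r$ shows that the theorem as stated is false: $J_{q}$ is torsion-free and non-divisible, $\Bbb Z(r)$ is compact torsion, yet $Ext(\Bbb Z(r),J_{q})\cong J_{q}/rJ_{q}=0$ since $r$ is a unit in $J_{q}$.

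The paper's argument takes a different route from yours --- it embeds $G$ in its minimal divisible extension $G^{\ast}$ and uses the exact sequence $0=Hom(X,G^{\ast})\to Hom(X,G^{\ast}/G)\to Ext(X,G)$, aiming to conclude from $Hom(X,G^{\ast}/G)\neq 0$ --- but it breaks at exactly the analogous point. The paper asserts that because $G^{\ast}/G$ is a nonzero discrete torsion divisible group, $Hom(X,G^{\ast}/G)$ contains a copy of $Hom(X,\Bbb Q/\Bbb Z)\cong\hat{X}$. This is the gap: a nonzero torsion divisible group is a direct sum of Pr\"ufer groups $\Bbb Z(p^{\infty})$, but only for those primes $p$ with $pG\neq G$, and it need not contain $\Bbb Q/\Bbb Z=\bigoplus_{p}\Bbb Z(p^{\infty})$. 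In the example $G=J_{q}$ one has $G^{\ast}=F_{q}$ and $G^{\ast}/G\cong\Bbb Z(q^{\infty})$, whence $Hom(\Bbb Z(r),G^{\ast}/G)=0$ for $r\neq q$. So the paper's proof and your Step~2 are two faces of the same reduction, and both require precisely the extra hypothesis you isolated: that some prime $p$ with $pG\neq G$ also divides the exponent of $X$. Under that hypothesis either argument goes through.
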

\begin{proof}
Let $G^{\ast}$ be the minimal divisible extension of $G$. By \cite[A.13]{HR}, $G^{\ast}$ is a divisible, torsion-free group. Since $X$ is torsion and $G^{\ast}$ torsion-free, so $Hom(X,G^{\ast})=0$. By \cite[Corollary 2.10]{FG1}, we have the following exact sequence $$0=Hom(X,G^{\ast})\to Hom(X,G^{\ast}/G)\to Ext(X,G)$$ Since $G^{\ast}/G$ is a discrete, torsion divisible group, so $Hom(X,G^{\ast}/G)$ containing a copy of $Hom(X,\Bbb Q/\Bbb Z)$. Hence by Lemma \ref{8}, $Ext(X,G)\neq 0$.
\end{proof}

\bibliographystyle{amsplain}

\end{document}